
\documentclass{article}
\textwidth=16.5cm
\oddsidemargin=0cm
\evensidemargin=0cm

\usepackage{tikz}
\usetikzlibrary{matrix}

\usepackage{amssymb}
\usepackage{verbatim}
\usepackage{array}
\usepackage{latexsym}
\usepackage{enumerate}
\usepackage{amsmath}
\usepackage{amsfonts}
\usepackage{amsthm}
\usepackage{color}
\usepackage[english]{babel}



\newtheorem{theorem}{Theorem}[section]

\newtheorem{lemma}[theorem]{Lemma}




\def\cC{\mathcal C}

\def\cF{\mathcal F}

\def\cH{\mathcal H}

\def\cX{\mathcal X}
\def\cY{\mathcal Y}

\def\PG{{\rm{PG}}}

\def\deg{\mbox{\rm deg}}

\def\dim{\mbox{\rm dim}}

\def\fq{{\mathbb F}_q}

\def\gg{\mathfrak{g}}


\newcommand{\PGL}{\mbox{\rm PGL}}

\newcommand{\aut}{\mbox{\rm Aut}}








\title{The Geometry of the Artin-Schreier-Mumford Curves over an Algebraically Closed Field}
\date{}
\author{G\'abor Korchm\'aros, Maria Montanucci}
\begin{document}
\maketitle

\vspace{0.5cm}\noindent {\em Keywords}:
Algebraic curves, algebraic function fields, positive characteristic, automorphism groups.
\vspace{0.2cm}\noindent

\vspace{0.5cm}\noindent {\em Subject classifications}:
\vspace{0.2cm}\noindent  14H37, 14H05.


\begin{abstract} For a power $q$ of a prime $p$, the Artin-Schreier-Mumford curve $ASM(q)$ of genus $\gg=(q-1)^2$ is the nonsingular model $\cX$ of the irreducible plane curve with affine equation $(X^q+X)(Y^q+Y)=c,\, c\neq 0,$ defined over a field $\mathbb{K}$ of characteristic $p$. The Artin-Schreier-Mumford curves are known from the study of algebraic curves defined over a non-Archimedean valuated field since for $|c|<1$ they are curves with a large solvable automorphism group of order $2(q-1)q^2 =2\sqrt{\gg}(\sqrt{\gg}+1)^2$, far away from the Hurwitz bound $84(\gg-1)$ valid in zero characteristic; see \cite{Co-Ka2003-1,Co-Ka-Ko2001,Co-Ka2004}. In this paper we deal with the case where $\mathbb{K}$ is an algebraically closed field of characteristic $p$. We prove that the group $\aut(\cX)$ of all automorphisms of $\cX$ fixing $\mathbb{K}$ elementwise has order $2q^2(q-1)$ and it is the semidirect product $Q\rtimes D_{q-1}$ where $Q$ is an elementary abelian group of order $q^2$ and $D_{q-1}$ is a dihedral group of order $2(q-1)$. For the special case $q=p$, this result was proven by Valentini and Madan \cite{mv1982}; see also \cite{AK}. Furthermore, we show that $ASM(q)$ has a nonsingular model $\cY$ in the three-dimensional projective space $PG(3,\mathbb{K})$ which is neither classical nor Frobenius classical over the finite field $\mathbb{F}_{q^2}$.
\end{abstract}
\section{Introduction}
\label{intr}
In this paper we work out a purely geometric approach to the study of Artin-Schreier-Mumford curves over an algebraically closed field $\mathbb{K}$ of characteristic $p$. We construct a nonsingular model $\cY$ of $ASM(q)$ of degree $2q$ in the three-dimensional projective space $\PG(3,\mathbb{K})$ and show that the linear series $g_{2q}^3$ cut out on $\cY$ by planes is complete. As a consequence, the automorphism group $\aut(ASM(q))$ of $ASM(q)$ fixing $\mathbb{K}$ elementwise is linear in $\PG(3,\mathbb{K})$ as it is isomorphic to the subgroup of $\PGL(4,\mathbb{K})$ which leaves the pointset of $\cY$ invariant. Using this isomorphism we prove that $\aut(ASM(q))$ is a solvable group of order $2(q-1)q^2=2\sqrt{\gg}(\sqrt{\gg}+1)^2$ as it happens over a non-Archimedean valuated field for $|c|<1$.
 In particular, $\aut(ASM(q))$ is quite large compared with its genus. In fact, if $\cX$ is any ordinary curve of genus $\gg(\cX)\geq 2$ defined over $\mathbb{K}$ of odd characteristic, and  $G$ is a solvable group of automorphisms of $\cX$ fixing $\mathbb{K}$ elementwise, then $|G|\leq 34(\gg(\cX)+1)^{3/2}$; see \cite{KM}.

We also point out that our nonsingular model $\cY$ has some unusual properties, namely it is neither classical nor Frobenius classical over $\mathbb{F}_{q^2}$.

\section{Background and Preliminary Results}\label{sec2}
The prime field of $\mathbb{K}$ is the finite field $\mathbb{F}_p$ and $\mathbb{K}$ contains a subfield $\mathbb{F}_{p^i}$ for every $i\geq 1$. In particular, $\mathbb{F}_q$ and its quadratic extension $\mathbb{F}_{q^2}$ are subfields of $\mathbb{K}$, and the trace function ${\rm{Tr}}(x)=x^q+x$ is defined for every $x\in \mathbb{K}$.

By a little abuse of notation, the term of Artin-Schreier-Mumford curve is also used for its plane model $\cC_q$ given by the curve of degree $2q$ with affine equation
\begin{equation}
\label{eq17dic2016} (X^q+X)(Y^q+Y)=c,\, c\in \mathbb{K}^*.
\end{equation}
We collect some basic facts on Artin-Schreier-Mumford curves. The curve $\cC_q$ has exactly two singular points, namely $X_\infty$ and $Y_\infty$, both have multiplicity $q$. They are ordinary singularities, and $\cC_q$ has genus $\gg=(q-1)^2$. The tangents at $X_\infty$ are the horizontal lines with equation $Y=d$ with ${\rm{Tr}}(d)=0$, and there exist exactly $q$ (linear) branches of $\cC_q$ centered at $X_\infty$. They form a set $\Omega_1$ of size $q$.
Similarly, the tangents at $Y_\infty$ are the vertical lines with equation $X=d$ with ${\rm{Tr}}(d)=0$, and the set $\Omega_2$ of the (linear) branches of $\cC_q$ centered at $Y_\infty$ has size $q$.

The following linear maps are automorphisms of $\cC_q$
\begin{equation}
\label{eq27dic2016}
\varphi_ {\alpha,\beta,v}(X,Y)=(vX+\alpha,v^{-1}Y+\beta), \quad \alpha,\beta,v \in \mathbb{K}, \quad {\rm{Tr(\alpha)}}={\rm{Tr(\beta)}}=0, \quad v^{q-1}=1.
\end{equation}
Here, $\Delta=\{\varphi_{\alpha,\beta,1}|\alpha,\beta\in \mathbb{K},{\rm{Tr(\alpha)}}={\rm{Tr(\beta)}}=0\}$ is an elementary abelian group of order $q^2$. Its subgroup $\Delta_1=\{\varphi_{\alpha,0,1}|\alpha\in \mathbb{K},{\rm{Tr(\alpha)}}=0\}$ has order $q$ and acts as a sharply transitive permutation group on the set of all tangents to $\cC_q$ at $Y_\infty$ (and hence on the set of the branches centered at $Y_\infty$) while it leaves invariant every tangent at $X_\infty$ (and hence every branch centered at $X_\infty$). Analogous results hold for the subgroup $\Delta_2=\{\varphi_{0,\beta,1}|\beta\in \mathbb{K},{\rm{Tr(\beta)}}=0\}$ whenever the roles of $X_\infty$ and $Y_\infty$ are interchanged. Also, $C=\{\varphi_{0,0,v}|v^{q-1}=1\}$ is a cyclic group of order $q-1$, and $\Delta$ together with $C$ generate a group $\Delta\rtimes C$ of order $q^2(q-1)$.

A further linear map which is an automorphism of $\cC_q$ is the involution
\begin{equation}
\label{eq37dic2016}
\xi(X,Y)=(Y,X)
\end{equation}
which interchanges $X_\infty$ and $Y_\infty$.  It fixes each point $P(a,a)$ of $\cC_q$ with ${\rm{Tr}}(a)^2=c$. The group generated by $C$ and $\xi$ is a dihedral group $D_{q-1}$ of order $2(q-1)$, and the group generated by the above linear maps is the semidirect product $\Delta\rtimes D_{q-1}$. Each linear map which is an automorphism of $\cC_q$ fixing $\mathbb{K}$ elementwise belongs to
$\Delta\rtimes D_{q-1}$.

Up to a field isomorphism over $\mathbb{K}$, the function field $\mathbb{K}(\cX)$ is $\mathbb{K}(x,y)$ with $(x^q+x)(y^q+y)-c=0$. In particular, $\Delta\rtimes D_{q-1}$ is a subgroup of $\aut(\mathbb{K}(\cX))$. The $q$ branches centered at $X_\infty$ correspond to $q$ places of $\mathbb{K}(\cX)$, say $P_1,\ldots,P_q$, and $\Delta\rtimes C$ leaves $\Omega_1=\{P_1,\ldots, P_q\}$ invariant. Analogously, $\mathbb{K}(\cX)$ has $q$ places, $Q_1,\ldots,Q_q$, arising from the branches centered  at $Y_\infty$, and $\Omega_2=\{Q_1,\ldots,Q_q\}$ is left invariant by $\Delta\rtimes C$. The involution $\xi$ interchanges $\Omega_1$ with $\Omega_2$.

If $\cC_q$ is defined over $\mathbb{F}_q$, that is, $c\in \mathbb{F}_q^*$, we may assume $c=1$. For this particular case the following results hold.

If $p$ is odd, a (nonsingular) point $P(u,v)\in \cC_q$ is defined over $\mathbb{F}_{q^2}$ if and only if  $u=\alpha+i\beta,v=\gamma+i\delta$ with $\alpha,\beta,\gamma,\delta\in \mathbb{F}_q$ where $i^2=s$ with a non-square element $s$ of $\mathbb{F}_q$ and  $i^q=-i$. From (\ref{eq17dic2016}), $(u^q+u)(v^q+v)=4\alpha\gamma=1$ whence $\gamma=\textstyle\frac{1}{4}(\alpha)^{-1}$. The number of $\mathbb{F}_{q^2}$-rational points of $\cC_q$ (and $\cX$) counts $(q-1)q^2+2q$ where the term $2q$ counts the number of linear branches centered at $X_\infty$ or $Y_\infty$.

If $p=2$,  a (nonsingular) point $P(u,v)\in \cC_q$ is defined over $\mathbb{F}_{q^2}$ if and only if  $u=\alpha+i\beta,v=\gamma+i\delta$ with $\alpha,\beta,\gamma,\delta\in \mathbb{F}_q$ where $i^2=i+s$ with a category one element $s\in \mathbb{F}_q$, and $i^q=i+s$; see \cite{Hirschfeld}. From (\ref{eq17dic2016}), $(u^q+u)(v^q+v)=\beta\delta=c$ whence $\delta=\beta^{-1} c$. As for $p>2$, the number of $\mathbb{F}_{q^2}$-rational points of $\cC_q$ (and $\cX$) equals $(q-1)q^2+2q$.

Take a point $P(a,a)\in \cC_q$ with $a\in\mathbb{F}_{q^2}$. Then the stabilizer of $P(a,a)$ in $\Delta\rtimes C$ is trivial. Therefore, $\Delta\rtimes C$ acts on the set $\Sigma$ of affine $\mathbb{F}_{q^2}$-rational points as a sharply transitive permutation group.
In terms of $\cX$, the set of all $\mathbb{F}_{q^2}$-rational points of $\cX(\mathbb{F}_{q^2})$ splits into two orbits under the action of $\Delta\rtimes D_{q-1}$, namely $\Omega_1\cup\Omega_2$ and $\Sigma$.

\section{Non-classicality of the Artin-Schreier-Mumford curve with respect to conics}
The theory of non-classical curves is found in \cite[Chapters 7,8]{HKT}. Here we point out that if $q\geq 5$ then the Artin-Schreier-Mumford curves are both non-classical and Frobenius non-classical with respect to conics.
\begin{lemma}
\label{lem58dic2016} Assume that $q\geq 5$. With respect to conics, the Artin-Schreier-Mumford curve $\cC_q$ is non-classical and has order sequence $(0,1,2,q).$
\end{lemma}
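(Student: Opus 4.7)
The plan is to read off the order sequence directly from a local expansion at a general smooth point $P_0=(x_0,y_0)\in \cC_q$. Set $\alpha := x_0^q+x_0$ and $\beta := y_0^q+y_0$, so that $\alpha\beta=c$, take $u=X-x_0$ as local parameter, and expand $v := Y-y_0 = \sum_{k\geq 1} a_k u^k$. Substituting into (\ref{eq17dic2016}) and using $\alpha\beta=c$ together with $(A+B)^q=A^q+B^q$, the defining relation becomes
\[
\alpha(v+v^q)+\beta(u+u^q)+(u+u^q)(v+v^q)=0.
\]
For $1\leq k \leq q-1$ the $u^q$ and $v^q$ contributions have $u$-order at least $q$, so comparing coefficients yields the linear recursion $\alpha a_1+\beta=0$ and $\alpha a_k+a_{k-1}=0$ for $2\leq k\leq q-1$; hence $a_k = (-1)^{k-1}\,y_1/\alpha^{k-1}$ with $y_1:=-\beta/\alpha$. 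At $k=q$ the terms $\alpha v^q$ and $\beta u^q$ enter as well, producing the corrected value
\[
a_q = (-1)^{q-1}\, y_1/\alpha^{q-1}+(y_1-y_1^q),
\]
whose extra summand $y_1-y_1^q$ is nonzero at a general $P_0$ (where $y_1 \notin \mathbb{F}_q$).

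Next I consider the natural four-parameter family of conics through the two points at infinity, $F = c_0+c_1 X+c_2 Y+c_3 XY$; these are the conics cutting out on $\cX$ the relevant linear series. Expanding $F(x_0+u,\, y_0+v(u))$, the coefficient of $u^k$ for every $k\geq 2$ equals $c_2 a_k + c_3(x_0 a_k + a_{k-1})$. Using the recursion $a_{k-1}=-\alpha a_k$ together with $\alpha-x_0=x_0^q$, this simplifies to $a_k\,(c_2 - c_3 x_0^q)$. Thus for every $k$ with $2\leq k\leq q-1$ the coefficient of $u^k$ is a scalar multiple of the single linear form $\ell(c_2,c_3)=c_2-c_3 x_0^q$, and the $q-2$ a priori independent conditions asking these coefficients to vanish collapse into the single condition $\ell=0$. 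Combined with the independent conditions that the $u^0$ and $u^1$ coefficients vanish, only three linear conditions on $(c_0,c_1,c_2,c_3)$ are needed to force contact of order $\geq q$ at $P_0$, leaving a one-parameter family of such conics. In particular the orders $3,4,\ldots,q-1$ are absent from the order sequence, while $0,1,2$ are present.

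Finally, I would verify that order $q$ is actually attained. Substituting $a_q$ into the $u^q$-coefficient and restricting to the one-parameter family $c_2=c_3 x_0^q$, the pattern piece vanishes by the same telescoping, while the extra piece collapses to $c_3\,\alpha\,(y_1-y_1^q)$, which is nonzero for general $P_0$. Hence the order sequence at $P_0$ is exactly $(0,1,2,q)$, and since the classical sequence for a three-dimensional linear series is $(0,1,2,3)$, this is non-classical whenever $q\geq 5$. The main obstacle is precisely the telescoping identity for $2\leq k\leq q-1$: recognising that $q-2$ seemingly independent linear conditions collapse into a single one is what drives the huge jump from $2$ to $q$ in the order sequence.
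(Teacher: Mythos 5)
Your proof is correct and follows essentially the same route as the paper's: both compute the branch expansion with coefficients $v_i=(-1)^i(v^q+v)/(u^q+u)^i$ for $i\le q-1$ and the Frobenius correction at $i=q$, and both exploit the resulting telescoping of the $t^i$-coefficients for $2\le i\le q-1$ against the osculating conic (your condition $c_2=c_3x_0^q$ is exactly the conic $(1+uv)^q+v^qX+u^qY+XY=0$ of the paper) to jump from order $2$ to order $q$. The only cosmetic differences are that you derive the hyperosculating conic by linear algebra on $(c_0,c_1,c_2,c_3)$ instead of quoting \cite[Lemma 7.74]{HKT}, and that you work from the outset with the $g^3_{2q}$ cut out by conics through $X_\infty$ and $Y_\infty$, which is indeed the sub-series for which the four-term sequence $(0,1,2,q)$ is the meaningful statement and the one used later for the space model $\cY$.
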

\begin{proof} Let $z_0=1+XY,\,z_1=Y,z_2=X,z_3=0,z_4=1,z_5=0$. The left hand side in Equation (\ref{eq17dic2016}) can be written as $z_0^q+z_1^qX+z_2^qY+z_3^qX^2+z_4^qXY+z_5Y^2$, and the first assertion follows from \cite[Section 7.8]{HKT}. Take an affine point $P=(u,v)\in \cC_q$. A primitive representation of the (unique) branch of $\cC_q$ centered at $P$ is
\begin{equation} \label{coeff} X=u+t,Y=v+v_1t+\ldots+v_it^i+\ldots,.
\end{equation}
where
\begin{equation}
\label{coefq2}
v_i=\left\{
\begin{array}{lll}
{\mbox{$(-1)^i\frac{v^q+v}{(u^q+u)^i}$  for  $1 \leq i\le q+1$, $i\neq q$}};\\
{\mbox{$(-1)^q\frac{v^q+v}{(u^q+u)^q}-(v_1^q+v_1)$  for  $i=q$.}}
\end{array}
\right.
\end{equation}
In particular,
\begin{equation} \label{coefq} v^q+v+v_q(u^q+u)+v_1^q(u^q+u)+v_{q-1}=0, \end{equation} and \begin{equation} \label{coefq1} v_1+v_1^q+v_q+v_{q+1}(u^q+u)=0. \end{equation}

From this, the local expansion of $z_0,z_1,z_2,z_4$ at $P$ are $z_0=1+uv+(v+v_1u)t+\ldots;\,z_1=v+v_1t+\ldots; \,z_2=u+t;\,z_4=1.$ From \cite[Lemma 7.74]{HKT}, the conic $\cC^2$ of equation
$(1+uv)^q+v^qX+u^qY+XY=0$ is the hyperosculating conic of $\cC_q$ at $P$ with $I(P,\cC_q\cap \cC^2)\geq q$, and this intersection number is bigger than $q$ if and only if
$(v+v_1u)^q+v_1^{q}u+v=0$. The latter equation is equivalent to $c^{-1}{\rm{Tr}}(u)^2\in \mathbb{F}_q$. More precisely,
if $I(P,\cC_q\cap \cC^2)>q$ occurs then $I(P,\cC_q\cap \cC^2)=q+1$. In fact, $I(P,\cC_q\cap \cC^2)$ for the hyperosculating conic of $\cC_q$ at $P$ can exactly be computed using the above primitive branch representation. From
\begin{equation} \label{con} (1+uv)^q+v^q(u+t)+u^q(v+v_1t+v_2t^2+ \ldots) +u(v+v_1t+v_2t^2+ \ldots)+t(v+v_1t+v_2t^2+ \ldots)=0. \end{equation}
the coefficients of $t$ and $t^i$ for $i \geq 2$ in \eqref{con} are given respectively by
\begin{equation} \label{coeffcon} {\mbox{$v^q+u^qv_1+uv_1+v$ and $u^qv_i+uv_i+v_{i-1},$  for $i \geq 2$.}} \end{equation}
From \eqref{coefq2}, $u^qv_i+uv_i+v_{i-1}=0$ holds for $i \leq q-1$. Moreover, from \eqref{coefq}, $u^qv_q+uv_q+v_{q-1}=0$ holds if and only if  $$\frac{-v_{q-1}-(v^q+v)-v_1^q(u^q+u)}{u^q+u} = v_q=-\frac{v_{q-1}}{u^q+u},$$ and hence if and only if $(v^q+v)/(u^q+u)=c/(u^q+u)^2 \in \fq$. Thus, $I(P,\cC_q\cap \cC^2) \geq q+1$ if and only if $c^{-1}{\rm{Tr}} (u)^2 \in \fq$. Assume that $c^{-1}{\rm{Tr}}(u)^2 \in \fq$. From \eqref{coefq1}, $u^{q}v_{q+1}+uv_{q+1}+v_{q}=0$ is equivalent to  $$v_1^q+v_1=0,$$ and hence to $(c^{-1}{\rm{Tr}}(u)^2)^{q-1}=-1$, a contradiction to $c^{-1}{\rm{Tr}}(u)^2 \in \fq$. Thus, $I(P,\cC_q\cap \cC^2)=q+1$ if and only if $c^{-1}{\rm{Tr}} (u)^2 \in \fq$.

Therefore, the order sequence of $\cC_q$ at $P$, with respect to conics, is $(0,1,2,q)$ apart from the case $c^{-1}{\rm{Tr}}(u)^2\in \mathbb{F}_q$ where the last order is $q+1$.
\end{proof}
\begin{lemma}
\label{lem68dic2016}  Assume that $q\geq 5$. With respect to conics, the Artin-Schreier-Mumford curve $\cC_q$ is Frobenius non-classical over $\mathbb{F}_{q^2}$ has order sequence $(0,1,q)$.
\end{lemma}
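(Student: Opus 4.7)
The plan is to establish Frobenius non-classicality of $\cC_q$ with respect to conics over $\mathbb{F}_{q^2}$ by verifying, for a generic affine point $P=(u,v)\in\cC_q$, that the Frobenius image $\Phi(P)=(u^{q^2},v^{q^2})$ lies on the hyperosculating conic at $P$ produced in the proof of Lemma \ref{lem58dic2016}. Once this is in place, the Frobenius order sequence $(\nu_0,\nu_1,\nu_2)$ is forced by the already-known $\epsilon$-sequence $(0,1,2,q)$. Throughout I would work under the hypothesis $c\in\mathbb{F}_q^*$ fixed in Section \ref{sec2}, which the computation below reveals to be the precise arithmetic condition on $c$ for the phenomenon to occur.

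The first step is to rewrite the hyperosculating conic at $P$ in a form visibly vanishing at $P$. Since the polynomial $(1+uv)^q+v^qX+u^qY+XY$ takes the constant value $1+c$ on $\cC_q$, subtracting $1+c$ and regrouping yields the equivalent equation
$$(v^q+v)(X-u)+(u^q+u)(Y-v)+(X-u)(Y-v)=0.$$
Setting $A=u^q+u$ and $B=v^q+v$, so that $AB=c$ by \eqref{eq17dic2016}, substituting $X=u^{q^2}$, $Y=v^{q^2}$, and invoking the Artin-Schreier identity $u^{q^2}-u=A^q-A$ together with its analogue for $v$, the left hand side collapses telescopically to $(AB)^q-AB=c^q-c$. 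Under the hypothesis $c\in\mathbb{F}_q^*$ this vanishes, so $\Phi(P)$ lies on the hyperosculating conic at every affine point $P\in\cC_q$. By the St\"ohr-Voloch criterion \cite[Chapters 7-8]{HKT}, $\cC_q$ is then Frobenius non-classical with respect to conics.

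To identify the precise Frobenius order sequence it remains to check that $\Phi(P)-P$ is generically not parallel to the tangent direction at $P$. In the Segre embedding $(X,Y)\mapsto(1:X:Y:XY)$ of $\cC_q$ into $\PG(3,\mathbb{K})$, the tangent direction at $P$ computes to $(0,A,-B,vA-uB)$, while $\Phi(P)-P$ has second and third coordinates $A^q-A$ and $B^q-B$; proportionality would force a non-generic algebraic relation between $A$ and $B$, easily ruled out for generic $(u,v)\in\cC_q$. Hence the Frobenius sequence is obtained from $(0,1,2,q)$ by deleting precisely $\epsilon_2=2$, giving $(\nu_0,\nu_1,\nu_2)=(0,1,q)$. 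The main obstacle is the telescoping cancellation producing $c^q-c$; once this identity is in hand, the tangent-line check and the translation into the Frobenius order sequence are bookkeeping within the St\"ohr-Voloch framework.
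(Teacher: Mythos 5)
Your proof is correct, and although it rests on the same underlying fact as the paper's proof --- that the $\mathbb{F}_{q^2}$-Frobenius image of a generic point lies on the osculating conic constructed in Lemma \ref{lem58dic2016} --- it is executed along a genuinely different and more complete route. The paper disposes of the lemma in two lines by appealing to the identity $F=\sum z_i^q\phi_i$ rewritten with Frobenius powers of the monomials (the displayed expression there is in fact garbled), whereas you verify the membership directly: after correcting the constant term of the hyperosculating conic (as printed, $(1+uv)^q+v^qX+u^qY+XY$ evaluates to $1+c$ at $(u,v)$, so it does not actually pass through $P$; your form $(v^q+v)(X-u)+(u^q+u)(Y-v)+(X-u)(Y-v)=0$ is the correct one, and one checks it kills the coefficients of $t,\dots,t^{q-1}$ in the branch expansion), the telescoping identity $B(A^q-A)+A(B^q-B)+(A^q-A)(B^q-B)=(AB)^q-AB=c^q-c$ is clean and correct. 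Your approach buys two things the paper's does not. First, it exposes the hypothesis $c^q=c$, i.e.\ $c\in\mathbb{F}_q$ (consistent with the normalization $c=1$ of Section \ref{sec2}), as necessary: for $c\in\mathbb{F}_{q^2}\setminus\mathbb{F}_q$ the Frobenius image misses the osculating conic and the lemma fails as stated. Second, it settles $\nu_1=1$ versus $\nu_1=2$ by checking that $\Phi(P)-P$ is generically not tangent --- proportionality of $(A^q-A,B^q-B)$ with $(A,-B)$ forces $A^qB+AB^q=2c$, i.e.\ $(A^{q-1}-1)^2=0$, which holds only at the finitely many points with $u^q+u\in\mathbb{F}_q$ --- a step the paper omits entirely when asserting that the Frobenius orders are $(0,1,q)$ rather than $(0,2,q)$. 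The only wording to tighten is the phrase ``takes the constant value $1+c$ on $\cC_q$'': what you are using is the polynomial identity $(1+XY)^q+Y^qX+X^qY+XY=1+(X^q+X)(Y^q+Y)$ evaluated at the single point $(X,Y)=(u,v)$ of $\cC_q$, not constancy of the conic's defining polynomial along the curve.
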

\begin{proof}
In fact, if $P(u,v)\in \cC_q$, its image $P'(u^{q^2},v^{q^2})$ by the Frobenius map over $\mathbb{F}_{q^2}$ is also a point of $\cC_q$. This follows from the fact that the left hand side in
(\ref{eq17dic2016}) can also be written as $z_0+z_1X^q+z_2Y^q+z_3X^q+z_4Y^qY^q+z_5Y^q$ with $z_0,\ldots,z_5$ as in the proof of Lemma \ref{lem58dic2016}. This together with Lemma \ref{lem58dic2016} also show that $q$ is a Frobenius order of $\cC_q$  over $\mathbb{F}_{q^2}$ which shows the second assertion.
\end{proof}

\section{Adjoint curves of $\cC_q$ of degree $q$}
\label{adj}
In our proof the divisors ${\bf{P}}=P_1+\ldots P_q$ and ${\bf{Q}}=Q_1+\ldots Q_q$ of $\mathbb{K}(\cX)$, and their sum ${\bf{G}}={\bf{P}}+{\bf{Q}}$ play an important role. They define the Riemann-Roch space $\mathcal{L}({\bf{G}})$ and we begin by computing its dimension $\ell({\bf{G}})$.

For this purpose we use a geometric approach by means of the corresponding complete linear series $|\verb"G"|$ on $\cC_q$ viewed as a projective curve
with homogeneous equation $(X_1^q+X_1X_3^{q-1})(X_2^q+X_2X_3^{q-1})-cX_3^{2q}=0$; see \cite[Chapter 3]{Goppa} and \cite[Chapter 6.2]{HKT}. Since $\cC_q$ is a plane curve with only ordinary singularities, the divisors of $|\verb"G"|$ are cut out on $\cC_q$ by certain adjoint curves of a given degree $d$ which are determined in our case as follows. Let ${\bf{D}}=(q-1){\bf{G}}$ be double-point divisor of $\cC_q$. Then a plane curve $\cF$ is adjoint if (the intersection divisor) $\cF\cdot\cC_q$  has the property that $\cF\cdot\cC_q \succeq{\bf{D}}$.

Take any adjoint $\cF$ of degree $d$ such that $\cF\cdot\cC_q \succeq {\bf{D}}+{\bf{G}}$ and let ${\bf{B}}=\cF\cdot\cC_q-({\bf{D}}+{\bf{G}})$. The adjoint curves $\Phi$ with $\deg(\Phi) = l$ such that $\Phi \cdot \cC_q \succeq {\bf{D}}+{\bf{B}}$ form a linear system that contains a linear subsystem $\Lambda$ free from curves having $\cC_q$ as a component. The curves in $\Lambda$ are the adjoints of $|\verb"G"|$. The (projective) dimension of $|\verb"G"|$ is $\dim(\Lambda)$, that is, the maximum number of linearly independent curves in $\Lambda$. Here $\ell({\bf{G}})=\dim(\Lambda)+1$.

In our case, a good choice for $\cF$ is the curve of degree $d=q$ which is the line at infinity (of equation $X_3=0$) counted $q$ times. In fact, $\cF\cdot \cC_q=q {\bf{G}}=
{\bf{D}}+{\bf{G}}$, and hence ${\bf{B}}$ is the zero divisor. Therefore $\Lambda$ comprises all degree $q$ adjoints  of $\cC_q$, equivalently all plane curves such that both $X_\infty$ and $Y_\infty$ are (at least) $(q-1)$-fold points. Also, $\deg|\verb"G"|=2q^2-2q(q-1)=2q$
\begin{lemma}
\label{lem17dice2016} Every degree $q$ adjoint of\, $\cC_q$ splits into the line at infinity counted $(q-2)$ times and a conic through $X_\infty$ and $Y_\infty$, and the converse also holds.    \end{lemma}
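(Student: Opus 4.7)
The plan is to proceed by Bézout's theorem applied iteratively to the line at infinity, peeling off one copy of $\ell_\infty\colon X_3=0$ at a time until only a conic remains.

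First I would observe that an adjoint $\cF$ of degree $q$ is, by definition, a plane curve of degree $q$ having multiplicity at least $q-1$ at each of $X_\infty$ and $Y_\infty$ (these being the only singularities of $\cC_q$, both ordinary of multiplicity $q$, so $\bf{D}=(q-1){\bf{G}}$ imposes precisely this requirement). Since $\ell_\infty$ passes through both $X_\infty$ and $Y_\infty$, the intersection multiplicity of $\cF$ with $\ell_\infty$ at either point is at least $q-1$. Hence, if $\cF$ and $\ell_\infty$ shared no common component, Bézout's theorem would force
\[
q=\deg(\cF)\cdot \deg(\ell_\infty)\ge 2(q-1),
\]
which is impossible for $q\ge 3$. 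Therefore $\ell_\infty$ is a component of $\cF$, say $\cF=\ell_\infty\cdot \cF_1$ with $\deg(\cF_1)=q-1$.

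Next, I would iterate. Since $\ell_\infty$ has multiplicity $1$ at both $X_\infty$ and $Y_\infty$, the residual curve $\cF_1$ inherits multiplicity at least $q-2$ at each of these two points. Repeating the Bézout argument gives
\[
\deg(\cF_k)=q-k,\qquad \text{mult}_{X_\infty}(\cF_k),\ \text{mult}_{Y_\infty}(\cF_k)\ge q-1-k,
\]
and $\ell_\infty$ splits off once more provided $2(q-1-k)>q-k$, that is, provided $k<q-2$. Thus after exactly $q-2$ iterations one is left with a curve $\cF_{q-2}$ of degree $2$ with multiplicity at least $1$ at both $X_\infty$ and $Y_\infty$, i.e., a conic through these two points. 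Hence $\cF=\ell_\infty^{q-2}\cdot\cC$ with $\cC$ a conic through $X_\infty$ and $Y_\infty$, establishing the direct implication.

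For the converse, a product of the form $\ell_\infty^{q-2}\cdot\cC$ with $\cC$ a conic through $X_\infty$ and $Y_\infty$ clearly has degree $q$ and multiplicity $(q-2)\cdot 1+1=q-1$ at each of $X_\infty$ and $Y_\infty$, so it is an adjoint of $\cC_q$. The one step that requires a little care is the very first Bézout inequality, where one must be sure to use the full intersection multiplicity $\ge q-1$ at each of the two ordinary singularities (and not confuse it with the multiplicity of $\cF$ itself); but this is standard for an ordinary $m$-fold point met by a line, so no real obstacle arises. The case $q=2$ is degenerate (the exponent $q-2=0$) and the statement then says simply that every degree-$2$ adjoint is a conic through $X_\infty$ and $Y_\infty$, which is immediate.
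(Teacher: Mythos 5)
Your proof is correct, but it takes a genuinely different route from the paper's. You peel off the line at infinity by applying B\'ezout to $\cF$ and $\ell_\infty$ iteratively: the multiplicities $\ge q-1-k$ of the residual curve $\cF_k$ at $X_\infty$ and $Y_\infty$ force local intersection numbers with $\ell_\infty$ that exceed $\deg(\cF_k)$ as long as $k<q-2$, so $\ell_\infty$ must divide $\cF_k$, and the bookkeeping of degrees and multiplicities under removal of a component is exactly right (including the strict inequality $2(q-1-k)>q-k$ and the endpoint $\cF_{q-2}$ of degree $2$ with multiplicity $\ge 1$ at both points). The paper instead works with the affine equation $F(X,Y)=\sum F_i(X,Y)$ and applies the same B\'ezout-type principle to the pencils of vertical and horizontal lines: a line through a $(q-1)$-fold point of a degree-$q$ curve meets it in at most one further point, so $F$ has degree at most $1$ in each of $X$ and $Y$ separately, whence $F=a+b_1X+b_2Y+dXY$ and the factor $X_3^{q-2}$ appears upon homogenization. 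Your argument is coordinate-free and arguably cleaner as a proof of the splitting statement; the paper's version has the advantage of producing the explicit basis $1,X,Y,XY$ of the space of residual conics, which is what underlies the definition of the morphism $\tau:(X_1,X_2,X_3)\mapsto(X_1X_3,X_2X_3,X_1X_2,X_3^2)$ in the next step. With your approach one would still need the short extra remark that a conic through $(1,0,0)$ and $(0,1,0)$ has no $X_1^2$ or $X_2^2$ term to recover that basis. Your treatment of the converse and of the degenerate case $q=2$ is fine, and your identification of the adjoint condition with ``multiplicity $\ge q-1$ at $X_\infty$ and $Y_\infty$'' matches the equivalence the paper records just before the lemma.
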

\begin{proof} Take a projective plane curve $\Phi$ of degree $q$ with affine equation $F(X,Y)=\sum_{i=0}^q F_i(X,Y)$ where $F_i(X,Y)$ is either $F_i(X,Y)=0$, or a homogeneous polynomial of degree $i$. If $\Phi$ is adjoint then $Y_\infty$ is a $(q-1)$-fold point of $\Phi$ and every (vertical) line of equation $X=d$ meets $\Phi$ in zero or just one affine point. In the latter case the line is not tangent to $\Phi$ at the common affine point. Therefore the polynomial $f_d(Y)=F_i(d,Y)$ has at most one (simple) root. The same holds for the polynomials $g_d(X)=F_i(X,d)$. Therefore, $F_i(X,Y)$ vanishes for $i\geq 3$ while $F_2(X,Y)=dXY$ whence $F(X,Y)=a+b_1X+b_2Y+dXY$. This shows that the line at infinity is a $(q-2)$-fold component of $\Phi$, and that the other component is a (possibly degenerate) conic through the points $X_\infty$ and $Y_\infty$.
\end{proof}
From Lemma \ref{lem17dice2016}, the complete linear series $|\verb"G"|$ gives rise to a projective curve $\cY$ of degree $2q$ in the three dimensional projective space $PG(3,\mathbb{K})$ with homogeneous coordinates $(Y_1,Y_2,Y_3,Y_4)$. More precisely, $\cY$ is birationally isomorphic to $\cX$ by the quadratic morphism $\tau:(X_1,X_2,X_3)\mapsto (X_1X_3,X_2X_3,X_1X_2,X_3^2)$. We go on to determine the images of the branches of $\cC_q$ centered at $X_\infty$ or at $Y_\infty$. A straightforward computation shows that if the branch  of $\cC_q$ is centered at $X_\infty$ and tangent to the line $Y=d$ with ${\rm{Tr}}(d)=0$, then a primitive branch representation is $X_1=1,\,X_2=dt+ct^{q+1}-ct^{2q}+\ldots,X_3=t$. Hence its image by $\tau$ has a primitive branch representation $Y_1=1,\,Y_2=dt+ct^{q+1}-ct^{2q}+\ldots,Y_3=d+ct^q-ct^{2q-1}+\ldots,Y_4=t$. Therefore, the image of the branch is centered at the point $P_d=(1,0,d,0)$ with order sequence $(0,1,q,q+1)$.
The $q$ points $P_d$ are pairwise distinct and form a set $\Omega_1'$ of collinear points. Similarly, the branch of $\cC_q$ centered at $Y_\infty$ and tangent to the line $X=d$ with ${\rm{Tr}}(d)=0$ is mapped by $\tau$ to a branch of $\cY$ centered at the point $Q_d=(0,1,d,0)$ with order sequence $(0,1,q,q+1)$, and the set $\Omega_2'$ of these points $Q_d$ has size $q$ and consists of collinear points. The intersection of $\cY$ with the plane $H$ at infinity of $PG(3,q)$ contains $\Omega_1'\cup \Omega_2'$. Since the latter set has size $2q$ and equals the degree of $\cY$, the intersection multiplicity $I(P_d,\cY\cap H)=1$. This shows that each point in $\Omega_1'\cup \Omega_2'$ is a simple point of $\cY$.

Each other point of $\cY$ arises from a (simple) affine point of $\cC_q$. From the proof of Lemma \ref{lem58dic2016} if the branch of $\cC_q$ is centered at the affine point $(u,v)$ with $(u^q+u)(v^q+v)=c$ then a primitive branch representation is $${\mbox{$X_1=u+t,\,X_2=v+v_1t+v_2t^2+v_3t^3+\ldots, X_3=1$ with $v_i=(-1)^i(v^q+v)/(u^q+u)^i$ for $i=1,2,3, \ldots, q-1$}}.$$
Therefore, the image by $\tau$ is a branch of $\cY$ centered at the point $R=(u,v,uv,1)$ with a primitive representation
$$Y_1=u+t,\,Y_2=v+v_1t+v_2t^2+v_3t^3+\ldots,\,Y_3=uv+(uv_1+v)t+(uv_2+v_1)t^2+\ldots,\, Y_4=1.$$
Thus, $I(R,\cH\cap\cY)=1$ for the hyperplane of equation $Y_1=uY_0$ while $I(R,\cH\cap\cY)=2$ for the hyperplane of equation $Y_2-vY_0=v_1(Y_1-uY_0)$. This shows that $0,1,2$ are orders of $\cY$ at $R$. Furthermore, $I(R,\cH\cap\cY)\geq q$ for the hyperplane arising from the hyperosculating conic of $\cC_q$ at $P(u,v)$, that is, for the hyperplane of equation $(1+uv)^qY_4^2+v^qY_1Y_4+u^qY_2Y_4+Y_1Y_2=0$. As we have pointed out in the proof of Lemma \ref{lem58dic2016}, $I(R,\cH\cap\cY)=q$ apart from a finite number of points $R$ arising from affine points of $\cC_q$ where the order sequence is $(0,1,2,q+1)$. Therefore, the order sequence of $\cY$ is $(0,1,2,q)$, and the third order at a point of $\cY$ is bigger than $2$ if and only if that point arises from a branch of $\cC_q$ centered at $X_\infty$ or $Y_\infty$.

Since different points $(u,v)$ are taken by $\tau$ to different points, the curve $\cY$ is nonsingular, and branches may be identified by their centers.
\begin{theorem}
\label{th1} The Artin-Schreier-Mumford curve has a nonsingular model $\cY$ in $PG(3,\mathbb{K})$. The curve $\cY$ has degree $2q$ and if $q\geq 5$ then it is non-classical with order sequence $(0,1,2,q)$ and also Frobenius non-classical over $\mathbb{F}_{q^2}$ with order sequence $(0,1,q)$.
\end{theorem}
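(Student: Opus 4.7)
The plan is to assemble Theorem~\ref{th1} from the geometric analysis of Section~\ref{adj}; no essentially new computation is required, only careful bookkeeping of what has already been established for the branches of $\cC_q$ under the map $\tau\colon(X_1,X_2,X_3)\mapsto(X_1X_3,X_2X_3,X_1X_2,X_3^2)$ associated to $|\mathbf{G}|$.

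First I would establish existence, degree, and nonsingularity of $\cY$. By Lemma~\ref{lem17dice2016}, the adjoint curves of degree $q$ cutting out $|\mathbf{G}|$ on $\cC_q$ are exactly (line at infinity counted $q{-}2$ times)$\,+\,$(conic through $X_\infty,Y_\infty$), so $|\mathbf{G}|$ is a complete $g^3_{2q}$ and $\cY\subset PG(3,\mathbb{K})$ has degree $\deg|\mathbf{G}|=2q^2-2q(q-1)=2q$. For nonsingularity, the primitive branch representations already derived show that each branch of $\cC_q$ at $X_\infty$ (respectively $Y_\infty$) maps to a branch of $\cY$ centered at one of $q$ distinct collinear points $P_d\in\Omega_1'$ (respectively $Q_d\in\Omega_2'$), with $I(P_d,\cY\cap H)=I(Q_d,\cY\cap H)=1$ against the plane $H$ at infinity. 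Since $|\Omega_1'\cup\Omega_2'|=2q=\deg\cY$, B\'ezout forces these to account for all of $\cY\cap H$ and each such point to be smooth with a single branch. The remaining points of $\cY$ come from distinct affine points of $\cC_q$ via the injective map $\tau$, with a well-defined tangent provided by the affine branch expansion $(u+t,\,v+v_1t+\ldots,\,uv+(uv_1+v)t+\ldots,\,1)$, so $\cY$ is smooth globally.

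For the order sequence, the same expansions show that at a generic affine point $R=(u,v,uv,1)$ the hyperplanes $Y_1=uY_4$ and $Y_2-vY_4=v_1(Y_1-uY_4)$ realise the orders $1$ and $2$, while the hyperplane associated to the hyperosculating conic at $(u,v)$ realises order $\geq q$ (by Lemma~\ref{lem58dic2016}). Since the order sequence of a curve in $PG(3,\mathbb{K})$ has length four and is determined by generic orders, the finite exceptional locus where the last order jumps to $q+1$ (the points with $c^{-1}{\rm Tr}(u)^2\in\mathbb{F}_q$, as isolated in the proof of Lemma~\ref{lem58dic2016}) does not affect it; hence the order sequence of $\cY$ is $(0,1,2,q)$, which in particular makes $\cY$ non-classical as soon as $q\geq 5$.

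For the Frobenius order sequence over $\mathbb{F}_{q^2}$, Lemma~\ref{lem68dic2016} shows that for a generic $(u,v)\in\cC_q$ the Frobenius image $(u^{q^2},v^{q^2})$ lies on the hyperosculating conic at $(u,v)$. Transporting through $\tau$, the $\mathbb{F}_{q^2}$-Frobenius image of a generic point of $\cY$ lies on its osculating hyperplane, so $2$ is absent from the Frobenius order sequence. Since that sequence has length three, must contain $0$ and $1$, and is obtained from $(0,1,2,q)$ by deleting one entry, it is forced to be $(0,1,q)$. The only real (but mild) subtlety I anticipate is confirming that both the orders and the Frobenius orders are indeed generic, i.e.\ unaffected by the finite locus where a higher-order coincidence occurs; this is however the standard definition of the order sequence of a linear series, and Lemmas~\ref{lem58dic2016} and~\ref{lem68dic2016} have already isolated the exceptional locus as finite, so no further obstacle arises.
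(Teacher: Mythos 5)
Your proposal is correct and follows essentially the same route as the paper: Theorem~\ref{th1} is there assembled from exactly the same ingredients, namely the branch computations under $\tau$ in Section~\ref{adj} (the collinear sets $\Omega_1'$, $\Omega_2'$, the degree count $|\Omega_1'\cup\Omega_2'|=2q=\deg\cY$ against the plane at infinity, and the injectivity of $\tau$ on affine points for nonsingularity) together with Lemmas~\ref{lem58dic2016} and~\ref{lem68dic2016} for the order sequence and the Frobenius order sequence. Your bookkeeping of the generic versus exceptional loci matches the paper's, so no further comment is needed.
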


\section{The automorphism group of the Artin-Schreier-Mumford curve}
From the discussion in Section \ref{adj}, the linear series cut out on $\cY$ by planes is complete. By \cite[Theorem 11.18]{HKT}, this yields that $\aut(\cX)$ can be represented by a linear group $G$ of $PG(3,\mathbb{K})$ which leaves the set of point of $\cY$ invariant. Furthermore, every map in $G$  preserves the order sequence at any point of $\cY$.
This proves the following result.
\begin{lemma}
\label{lem37dice2016} $G$ leaves $\Omega_1'\cup \Omega_2'$ invariant.
\end{lemma}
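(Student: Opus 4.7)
The plan is to use the intrinsic characterization of $\Omega_1'\cup \Omega_2'$ established at the end of Section \ref{adj}, together with the fact (asserted in the paragraph immediately preceding the lemma) that every element of $G$ preserves the order sequence of $\cY$ at each of its points. Concretely, I would invoke \cite[Theorem 11.18]{HKT} to see that a linear automorphism of $\PG(3,\mathbb{K})$ stabilizing $\cY$ permutes its points and carries the order sequence at a point $P$ to the order sequence at its image. So the strategy is reduced to exhibiting $\Omega_1'\cup\Omega_2'$ as the set of points of $\cY$ whose order sequence differs, in a specific recognizable way, from the generic one.

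For the key step, I would recall from the computation in Section \ref{adj} that at every point $P\in \Omega_1'\cup\Omega_2'$ the order sequence of $\cY$ at $P$ is $(0,1,q,q+1)$, whereas at every point of $\cY$ arising from an affine point of $\cC_q$ the order sequence is $(0,1,2,q)$ or, exceptionally (at finitely many points where $c^{-1}{\rm{Tr}}(u)^2\in \fq$), $(0,1,2,q+1)$. Thus the third order $j_2$ at a point $P$ of $\cY$ equals $2$ precisely when $P$ comes from an affine point of $\cC_q$, and equals $q$ precisely when $P\in \Omega_1'\cup\Omega_2'$. Since $q\geq 2$ (and in fact $q\geq 5$ in the non-classical regime covered by Theorem \ref{th1}), the condition ``$j_2>2$'' distinguishes the two types, and this condition is preserved by every element of $G$.

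Putting the two observations together, any $g\in G$ maps a point with $j_2=q$ to a point with $j_2=q$, so $g(\Omega_1'\cup\Omega_2')\subseteq \Omega_1'\cup\Omega_2'$; applying the same argument to $g^{-1}\in G$ yields equality, which is the conclusion of the lemma.

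The only technical point — which is really the main obstacle, though it has already been discharged in Section \ref{adj} — is the explicit calculation that the branches of $\cC_q$ centered at $X_\infty$ or $Y_\infty$ produce branches on $\cY$ whose primitive representation has the shape $Y_1=1,\,Y_2=dt+ct^{q+1}+\cdots,\,Y_3=d+ct^q+\cdots,\,Y_4=t$, so that the hyperplane orders jump from $\{0,1\}$ straight to $q$, while at every affine image point the order $2$ is actually attained (as witnessed by the hyperplane $Y_2-vY_0=v_1(Y_1-uY_0)$). Once those local computations are in hand, the lemma follows from the invariance of the order sequence alone.
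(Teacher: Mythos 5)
Your proposal is correct and follows exactly the paper's own (very terse) argument: $G$ preserves order sequences by \cite[Theorem 11.18]{HKT}, and the points of $\Omega_1'\cup\Omega_2'$ are intrinsically characterized, by the local computations of Section \ref{adj}, as the points of $\cY$ whose third order exceeds $2$. You merely make explicit the characterization that the paper leaves implicit in the sentence preceding the lemma.
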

More precisely, the following result holds.
\begin{lemma}
\label{lem77dice2016} $G$ contains a subgroup of index $\le 2$ that leaves both $\Omega_1'$ and $\Omega_2'$ invariant.
\end{lemma}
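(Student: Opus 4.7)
The plan is to exhibit two lines in $\PG(3,\mathbb{K})$ whose unordered pair is invariant under $G$, so that the kernel of the resulting action on this pair yields the required subgroup of index at most $2$. Using the coordinates $P_d=(1,0,d,0)$ of the points of $\Omega_1'$, the set $\Omega_1'$ is contained in the line $\ell_1$ with equations $Y_2=Y_4=0$, and analogously $\Omega_2'$ is contained in the line $\ell_2: Y_1=Y_4=0$. Both lines lie in the plane at infinity $H:Y_4=0$ and they meet only at the point $(0,0,1,0)$, which belongs to neither $\Omega_1'$ nor $\Omega_2'$. Hence
\[
\Omega_1'=(\Omega_1'\cup\Omega_2')\cap\ell_1,\qquad \Omega_2'=(\Omega_1'\cup\Omega_2')\cap\ell_2.
\]

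Next, I will check that, for $q\geq 3$, every line of $\PG(3,\mathbb{K})$ distinct from $\ell_1$ and $\ell_2$ meets $\Omega_1'\cup\Omega_2'$ in at most two points: a line not contained in $H$ meets the union in at most one point, while a line $m\subseteq H$ different from $\ell_1,\ell_2$ meets each of $\ell_1,\ell_2$ in a single point, so $|m\cap(\Omega_1'\cup\Omega_2')|\leq 2$. Now fix $g\in G$. By Lemma \ref{lem37dice2016}, $g$ is a collineation that preserves $\Omega_1'\cup\Omega_2'$, so $g(\Omega_1')$ is a set of $q\geq 3$ collinear points lying in the union; the line supporting $g(\Omega_1')$ must therefore be $\ell_1$ or $\ell_2$, forcing $g(\ell_1)\in\{\ell_1,\ell_2\}$. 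Symmetrically $g(\ell_2)\in\{\ell_1,\ell_2\}$, and since $g$ acts bijectively on the lines of $\PG(3,\mathbb{K})$, it permutes the unordered pair $\{\ell_1,\ell_2\}$.

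Finally, the induced homomorphism $G\to\mathrm{Sym}\{\ell_1,\ell_2\}$ has a kernel $G_0$ of index at most $2$ in $G$. For $g\in G_0$, since $g(\ell_1)=\ell_1$ one gets $g(\Omega_1')\subseteq(\Omega_1'\cup\Omega_2')\cap\ell_1=\Omega_1'$, and equality follows by finiteness; symmetrically $g(\Omega_2')=\Omega_2'$. Thus $G_0$ is the claimed subgroup. The only delicate point is the passage from ``$g$ preserves the union'' to ``$g$ preserves each line $\ell_i$'', which rests precisely on the collinearity of $\Omega_1'$ and $\Omega_2'$ established in Section~\ref{adj}; I foresee no further obstacle.
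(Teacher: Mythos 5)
Your proof is correct and follows essentially the same route as the paper: both arguments rest on the collinearity of $\Omega_1'$ and $\Omega_2'$ and on $|\Omega_i'|=q>2$ to force any element of $G$ to permute the pair of supporting lines, and then take the kernel of the induced action on that pair. You merely spell out the details (the explicit equations of $\ell_1,\ell_2$ and the count of intersections of other lines with $\Omega_1'\cup\Omega_2'$) that the paper leaves implicit.
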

\begin{proof} As we have pointed out in Section \ref{adj}, both $\Omega_1'$ and $\Omega_2'$ consist of collinear points. Since their size is equal to $q>2$, any linear map leaving $\Omega_1'\cup \Omega_2'$ invariant acts on it  either interchanging $\Omega_1'$ with $\Omega_2'$, or leaving both invariant. Those maps in $G$ which act as in the latter case form a subgroup of index $\le 2$.
\end{proof}
We are in a position to prove the following result.
\begin{theorem}
\label{th2} The automorphism group of the Artin-Schreier-Mumford curve has order $2q^2(q-1)$ and it is the semidirect product $\Delta\rtimes D_{q-1}$ where $\Delta$ is an elementary abelian (normal) subgroup of order $q^2$, and $D_{q-1}$ is a dihedral group of order $2(q-1)$.
\end{theorem}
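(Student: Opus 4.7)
The plan is to establish the reverse inequality $|\aut(\cX)| \le 2q^2(q-1)$: combined with the subgroup $\Delta \rtimes D_{q-1}$ of this order exhibited in Section~\ref{sec2}, this will yield the stated equality of groups. Using the linear realization $G\le\PGL(4,\mathbb{K})$ of $\aut(\cX)$ acting on $\cY$ (introduced at the beginning of this section), together with Lemma~\ref{lem77dice2016} and the fact that $\xi\in G$ actually swaps $\Omega_1'$ with $\Omega_2'$, the setwise stabilizer $G^+$ of $\Omega_1'$ has index exactly $2$ in $G$. So it suffices to show $|G^+|\le q^2(q-1)$.

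Fix $g\in G^+$. Since each $\Omega_i'$ consists of $q\ge 3$ collinear points, $g$ preserves both lines $\ell_i=\langle\Omega_i'\rangle$, their common point $E=(0{:}0{:}1{:}0)$, and the plane $H_\infty:Y_4=0$ that they span. After rescaling a matrix representative, preservation of $H_\infty$ (zeros in the last row) together with preservation of $\ell_1,\ell_2$ (zeros off the diagonal in the first two rows) forces $g$ to be represented by
\begin{equation*}
M=\begin{pmatrix} M_{11} & 0 & 0 & t_1\\ 0 & M_{22} & 0 & t_2\\ M_{31} & M_{32} & M_{33} & t_3\\ 0 & 0 & 0 & 1\end{pmatrix}.
\end{equation*}

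The heart of the argument extracts the remaining constraints from the fact that $g$ preserves all of $\cY$, not just $\Omega_1'\cup\Omega_2'$. Parametrize the affine part of $\cY$ by $(u,v)\mapsto(u,v,uv,1)$ with $(u^q+u)(v^q+v)=c$, and apply $M$: the image must again be of the form $(u',v',u'v',1)$ with $(u',v')\in\cC_q$, giving two families of equations. The identity $(uv)'=u'v'$, combined with linear independence of $1,u,v,uv$ on $\cC_q$ (equivalent to the non-degeneracy of $\cY$ in $\PG(3,\mathbb{K})$, cf.\ Section~\ref{adj}), yields
\begin{equation*}
M_{33}=M_{11}M_{22},\quad M_{31}=M_{11}t_2,\quad M_{32}=M_{22}t_1,\quad t_3=t_1 t_2.
\end{equation*}
The identity $(u'^q+u')(v'^q+v')=c$, after substituting $u^q v^q=c-u^q v-uv^q-uv$ from the defining equation, becomes a relation in $1,u,v,uv,u^q,v^q,u^q v,uv^q$. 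These eight monomials are linearly independent on $\cC_q$ (they are distinct monomials of bidegree $\ne(q,q)$ in $\mathbb{K}[x,y]/((x^q+x)(y^q+y)-c)$), and comparing coefficients forces the Frobenius-type identities $M_{11}^q=M_{11}$, $M_{22}^q=M_{22}$, $\Tr(t_1)=\Tr(t_2)=0$, together with $M_{11}M_{22}=1$.

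The surviving free parameters are $M_{11}\in\mathbb{F}_q^*$ ($q-1$ choices, with $M_{22}=M_{11}^{-1}$ then determined) and $t_1,t_2$ with $\Tr(t_i)=0$ ($q$ choices each), giving $|G^+|\le q^2(q-1)$, hence $|\aut(\cX)|\le 2q^2(q-1)$; equality with $|\Delta\rtimes D_{q-1}|$ follows. Each enumerated matrix is directly recognizable as the image of an element of $\Delta\rtimes D_{q-1}$, so in fact $\aut(\cX)=\Delta\rtimes D_{q-1}$. The principal obstacle is the Frobenius step: the purely projective conditions coming from preserving $\Omega_1'\cup\Omega_2'$ alone would only produce the weaker bound $q^2(q-1)^2$, so one really needs the specific Artin--Schreier form of the defining equation---entering through the expansion of $u'^q+u'$ under a generic linear substitution---to force $M_{11},M_{22}\in\mathbb{F}_q$ and thereby close the gap.
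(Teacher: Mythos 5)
Your proposal is correct and follows essentially the same route as the paper: reduce via Lemma~\ref{lem77dice2016} to the index-$\le 2$ subgroup stabilizing both $\Omega_1'$ and $\Omega_2'$, use collinearity of each $\Omega_i'$ to force the block-triangular matrix shape, and recognize the resulting maps as affine linear automorphisms of the plane model $\cC_q$. The only difference is that where the paper simply invokes the assertion from Section~\ref{sec2} that every linear automorphism of $\cC_q$ lies in $\Delta\rtimes D_{q-1}$, you prove it explicitly by comparing coefficients against the linearly independent functions $1,u,v,uv,u^q,v^q,u^qv,uv^q$ on $\cC_q$ --- a computation that is correct and actually fills in a detail the paper leaves unproved.
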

\begin{proof} By Lemma \ref{lem77dice2016}, a subgroup $H$ of $G$ of index $\leq 2$ leaves both $\Omega_1'$ and $\Omega_2'$ invariant. As we have shown in Section \ref{adj}, the points of $\Omega_1'$ lie on the line $\ell_1$ of equation $X_2=X_4=0$ while those of $\Omega_2'$ on the line $\ell_2$ of equation $X_1=X_4=0$. These lines $\ell_1$ and $\ell_2$ are incident as $Z_\infty(0,0,1,0)$ is their common point. Hence  $H$ fixes $Z_\infty$.

Take $h\in H$ with its $4\times 4$ matrix representation $M=(m_{ij})$. Then $m_{13}=m_{23}=m_{03}=0$ as $h$ fixes $Z_\infty$. Also, $m_{12}=m_{42}=0$ as $h$ leaves $\Omega_1'$ invariant. Similarly, $m_{21}=m_{41}=0$ as $h$ leaves $\Omega_2'$ invariant. Therefore,
$$M=\left(
    \begin{array}{cccc}
      m_{11} & 0 & 0 & m_{14} \\
      0 & m_{12} & 0 & m_{24} \\
      m_{31} & m_{32} & m_{33} & m_{34}\\
      0 & 0 & 0  & 1 \\
    \end{array}
  \right),
$$
which shows that $h$ induces a linear map on $\cC_q$. Therefore $h\in \Delta\rtimes C_{q-1}$, and hence $H=\Delta\rtimes C_{q-1}$. Since $[G:H]\leq 2$ and $[\aut(\cX):(\Delta\rtimes C_{q-1})]=2$,
this yields $[G:H]=2$ and $\aut(\cX)=\Delta\rtimes D_{q-1}$.
\end{proof}

\vspace{0.2cm}\noindent G\'abor KORCHM\'AROS and Maria MONTANUCCI\\ Dipartimento di
Matematica, Informatica ed Economia\\ Universit\`a degli Studi  della Basilicata\\ Contrada Macchia
Romana\\ 85100 Potenza (Italy).\\E--mail: {\tt
gabor.korchmaros@unibas.it and maria.montanucci@unibas.it}

    \end{document}